\documentclass[10pt]{amsart}
\usepackage{verbatim}
\usepackage{eucal,url,amssymb,stmaryrd,enumerate,amscd}
\usepackage{amsmath}
\usepackage[pagebackref,colorlinks=true,linkcolor=blue,citecolor=blue]{hyperref}
\usepackage{amsfonts}
\usepackage{amsthm}
\usepackage[margin=1in]{geometry}

\setcounter{MaxMatrixCols}{10}

\linespread{1.06}
\numberwithin{equation}{section}
\newtheorem{thrm}{Theorem}[section]
\newtheorem{lemma}[thrm]{Lemma}
\newtheorem{prop}[thrm]{Proposition}
\newtheorem{cor}[thrm]{Corollary}
\newtheorem{dfn}[thrm]{Definition}

\newtheorem{rmrk}[thrm]{Remark}

\newtheorem{conv}[thrm]{Convention}

\overfullrule 5pt
 1

\newcommand{\vol}{\, Vol_{\eta}}

\begin{document}

\begin{abstract}
A version of Lichnerowicz' theorem giving a lower bound
of the eigenvalues of the sub-Laplacian under a lower bound on the $%
Sp(1)Sp(1)$ component of the qc-Ricci curvature on a compact seven dimensional quaternionic contact manifold is established. It is shown that in the
case of a seven dimensional compact 3-Sasakian manifold the lower bound is reached if and only if the quaternionic contact manifold is a round 3-Sasakian sphere.
\end{abstract}

\keywords{quaternionic contact structures, qc conformal flatness, qc
conformal curvature, Einstein metrics}
\subjclass[2010]{53C26, 53C25, 58J60}
\title[The sharp lower bound of the first eigenvalue]{The sharp lower bound
of the first eigenvalue of the sub-Laplacian on a quaternionic
contact manifold in dimension 7}
\date{\today }
\thanks{This work has been partially funded by}
\author{S. Ivanov}
\address[Stefan Ivanov]{University of Sofia, Faculty of Mathematics and
Informatics, blvd. James Bourchier 5, 1164, Sofia, Bulgaria}
\email{ivanovsp@fmi.uni-sofia.bg}
\author{A. Petkov}
\address[Alexander Petkov]{University of Sofia, Faculty of Mathematics and
Informatics, blvd. James Bourchier 5, 1164, Sofia, Bulgaria}
\email{a\_petkov\_fmi@abv.bg}
\author{D. Vassilev}
\address[Dimiter Vassilev]{ Department of Mathematics and Statistics\\
University of New Mexico\\
Albuquerque, New Mexico, 87131-0001}
\email{vassilev@math.unm.edu}
\maketitle
\tableofcontents

\setcounter{tocdepth}{2}

\section{Introduction}

The purpose of this paper is to give a version of the results  of
\cite{IPV1} in the case of a seven dimensional quaternionic
contact   (abbr.   qc) manifold. Thus, the main result is the
establishment of a lower bound for the first eigenvalue on a seven
dimensional qc manifold satisfying a Lichnerowicz type condition
extending the classical \cite{Li} and CR
\cite{Gr,LL,Chi06} Lichnerowicz type results  to the case of a
general compact qc manifold. We also prove an  Obata  type result
\cite{O3} characterizing the extremal case in the qc Lichnerowicz
type result assuming the extremal case is  a  (seven dimensional)
qc-Einstein manifold of constant qc-scalar curvature.

It should be noted that a seven dimensional qc structure,
similarly to a three dimensional  strongly pseudoconvex manifold,
frequently presents some additional difficulties or requires a
different analysis in various geometric and analytic questions,
see for example \cite{IMV1} and \cite{IMV}. Such is the case of
this paper. On the other hand, a notable difference
 are the (conformal) flatness problems,
see \cite{Car,ChM,IVZ} and \cite{IV} respectively, where there is
no distinction between the seven and higher dimensional results in
the qc case.

Another motivation for the current paper comes from a number of
Sasakian  case results   \cite{CC07,CC09b,CC09a}, \cite{Bar} and
\cite{ChW}. These CR results came after Greenleaf \cite{Gr} proved
the CR Lichnerowicz type result on a $2n+1$ dimensional CR
manifold for $n\geq 3$. Subsequently,  \cite{LL} adapted
Greenleaf's proof to cover the case $n=2$, while the case $n=1$
was treated in \cite{LL} assuming further a condition on the
covariant derivative of the pseudohermitian torsion. In
\cite{Chi06} the author also considered  the $n=1$ case replacing
the additional assumption with the \textit{CR-invariant condition}
that the CR-Paneitz operator is non-negative. As well known, for
$n>1$ the CR-Paneitz operator is always non-negative,  while in
the case $n=1$ the vanishing of the pseudohermitian torsion
implies that the CR-Paneitz operator is non-negative,  see
\cite{Chi06} and\cite{CCC07}.

In the current paper we introduce a certain $P-$function in the
setting of a qc structure which allows us to obtain   in
dimension seven the qc Lichnerowicz - Obata type results, except for the
latter we require that the extremal qc structure is  3-Sasakian.
This  condition means that $M$ is a qc-Einstein structure
of constant  positive qc-scalar curvature, see \cite{IMV}, and also \cite{IV1} and
\cite{IV2} for the negative scalar curvature case. Note that in  higher dimensions the qc-Einstein condition implies the constancy of the qc-scalar curvature \cite{IMV}.   In view of
\cite{IVO}, \cite{LW1} and \cite{IV3}, where the Obata type result
was proven for a general CR manifold, our qc Obata-type result is
not completely satisfactory, but the non qc-Einstein case seems to
present extra difficulties. In addition, the introduced $P-$form
is interesting on its own.  Here $P$ should remind not
only of the Paneitz operator, but also of the $P-$function used in
the theory of elliptic partial differential equations, see
\cite{Sperb} for references and some results.

\begin{thrm}
\label{mainpan} Let $(M,g,\mathbb{Q})$ be a compact quaternionic contact
manifold of dimension seven. Suppose there is a positive constant $k_0$
such that the normalized scalar curvature $S$ and the torsion $T^0$
satisfy the Lichnerowicz type inequality
\begin{equation}  \label{7Dcondm-app}
6Sg(X,X)+10T^0(X,X)\geq k_0 g(X,X).
\end{equation}
If, in addition, the  $P-$function of any eigenfunction of the sub-Laplacian is non-negative,
then for any eigenvalue $\lambda$ of the sub-Laplacian $\triangle$ we have  the inequality
\begin{equation*}
\lambda \ge \frac13k_0.
\end{equation*}
\end{thrm}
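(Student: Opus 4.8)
The plan is to run the Bochner technique adapted to the Biquard connection and the horizontal distribution of the qc structure. Fix a non-constant eigenfunction $f$ of the sub-Laplacian, normalized so that $\triangle f = -\lambda f$; then $\lambda>0$ and, since $M$ is compact and closed, integration by parts produces the two identities
\[
\int_M \abs{\nabla f}^2 = \lambda \int_M f^2, \qquad \int_M (\triangle f)^2 = \lambda^2 \int_M f^2 .
\]
In particular $\int_M f^2>0$, so the desired estimate $\lambda \ge \tfrac13 k_0$ will follow as soon as I assemble a single integral inequality in which $\int_M(\triangle f)^2$ dominates $k_0\int_M\abs{\nabla f}^2$ with the correct numerical factor; dividing through by $\lambda\int_M f^2$ then gives the claim.

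First I would write down the integrated horizontal Bochner identity for $\int_M (\triangle f)^2$, expressing it as the sum of the squared norm of the horizontal Hessian $\nabla^2 f$, a Ricci-type contraction producing exactly the combination $6S\,g(\nabla f,\nabla f)+10\,T^0(\nabla f,\nabla f)$ appearing in \eqref{7Dcondm-app}, and a collection of correction terms arising from the three vertical directions and the almost complex structures $I_1,I_2,I_3$. These last terms come from commuting horizontal derivatives, which in the qc setting unavoidably introduces torsion and curvature; this step is in essence a bookkeeping exercise built on the commutation formulas for the Biquard connection recalled earlier.

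Next I would bound the Hessian term from below. Decomposing $\nabla^2 f$ into its $Sp(1)Sp(1)$-irreducible pieces (the trace part, the symmetric trace-free part, and the components governed by $I_1,I_2,I_3$) and applying Cauchy--Schwarz in the four-dimensional horizontal fibre of a seven dimensional qc manifold gives a lower bound of the form $c\,(\triangle f)^2$ with $c$ fixed by the horizontal dimension $4$; it is exactly this numerology, combined with the normalization of the curvature combination, that produces the constant $\tfrac13$. With the Hessian controlled I would then invoke the hypothesis \eqref{7Dcondm-app} to replace the Ricci-type contraction by $k_0\int_M\abs{\nabla f}^2 = k_0\lambda\int_M f^2$.

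The remaining, genuinely delicate, point is the treatment of the correction terms, and I expect this to be the main obstacle and the technical heart of the argument. In dimension seven the naive Bochner argument does not close: the vertical and $I_s$-contributions are not sign-definite and cannot be absorbed by the curvature lower bound alone, in exact analogy with the failure of Greenleaf's CR argument in the case $n=1$. The plan is to show, after a further round of integration by parts, that precisely this indefinite remainder reorganizes into a constant multiple of the $P$-function of $f$ together with manifestly non-negative square terms. Feeding in the hypothesis that the $P$-function of the eigenfunction is non-negative then lets me discard the remainder with the correct sign, and collecting all contributions yields $\lambda^2\int_M f^2 \ge \tfrac13 k_0\lambda\int_M f^2$, hence $\lambda \ge \tfrac13 k_0$. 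Verifying that the correction terms assemble exactly into the $P$-function, with coefficients matching the $6$ and $10$ in \eqref{7Dcondm-app}, is where I expect the real work to lie.
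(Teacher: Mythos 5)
Your outline follows the same route as the paper's proof: a horizontal Bochner identity, the $Sp(1)Sp(1)$ decomposition of the horizontal Hessian (where in dimension seven the trace part satisfies the exact identity $|(\nabla^2 f)_{[3]}|^2=\tfrac14(\triangle f)^2$, cf.\ \eqref{xiii}, rather than a Cauchy--Schwarz inequality), and a reorganization of the indefinite vertical/torsion remainder into a multiple of the $P$-function plus manifestly non-negative squares, after which the two hypotheses close the estimate. However, the attempt has a genuine gap exactly at the step you yourself defer as ``where the real work lies'': you never exhibit the mechanism that converts the remainder into $-\tfrac{3}{16}\int_M P_f(\nabla f)\vol$ plus the square term $\tfrac14|(\nabla^2 f)_{[-1][s]}|^2$. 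In the paper this is not a single further integration by parts applied to the Bochner remainder; it hinges on having \emph{two independent evaluations} of the mixed vertical term $\int_M\sum_{s=1}^3\nabla^2f(\xi_s,I_s\nabla f)\vol$ --- one, from \cite[Lemma 3.4]{IPV1}, expressing it through $|(\nabla^2 f)_{[-1][a]}|^2$ and $T^0(\nabla f,\nabla f)$, and one, Lemma~\ref{gr3} (identity \eqref{e:gr3}), expressing it through the third-order terms $\sum_{t=1}^3\nabla^3f(I_t\nabla f,e_b,I_te_b)$ which, together with \eqref{lap}, are precisely what assemble into the $P$-form \eqref{e:def P}. Equating the two evaluations is what simultaneously eliminates the antisymmetric $[-1]$-part of the Hessian and makes $P_f$ appear in \eqref{mnp}; without identities of this kind the argument does not close, and nothing in your plan produces them.

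There is also a concrete claim in your proposal that is wrong and would derail the bookkeeping: the Ricci contraction in the Bochner formula does \emph{not} produce ``exactly'' the combination $6S|\nabla f|^2+10\,T^0(\nabla f,\nabla f)$. By \eqref{sixtyfour} with $n=1$ and $U=0$ one has $Ric(\nabla f,\nabla f)=6S|\nabla f|^2+4\,T^0(\nabla f,\nabla f)$. The coefficient $10$ of \eqref{7Dcondm-app} only emerges at the end of the computation, when the curvature and torsion terms collect in \eqref{mnp} to $\tfrac54 T^0(\nabla f,\nabla f)+\tfrac34 S|\nabla f|^2=\tfrac18\bigl(10\,T^0(\nabla f,\nabla f)+6S|\nabla f|^2\bigr)$, with contributions coming from three distinct sources: the Bochner--Ricci term, the torsion terms generated by the vertical identities (via \eqref{need1} and \eqref{boh4}), and the $4T^0(X,\nabla f)$ term that is built into the very definition of $P_f$. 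So your strategy is the right one in outline --- it is the paper's --- but the decisive computation is absent, and the one explicit coefficient claim you make would have to be corrected before the pieces could fit together.
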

The $P$-function of a smooth function $f$, cf. Definition~\ref{d:def P}, is defined with the help of the Biquard connection, the qc-scalar curvature and the $[-1]$ component of the torsion tensor, see Section \ref{ss:Biq conn}, \eqref{qscs} and \eqref{Tcompnts}.  We say that the $P-$function  of $f$ is non-negative if
\begin{equation*}  
-\int_M P_f\vol\geq 0.
\end{equation*}

We note that  the Lichnerowicz type assumption in any dimension is
\begin{equation}  \label{condm-app}
2(n+2)Sg(X,X)+\frac{4n^2+14n+12}{2n+1}T^0(X,X)+\frac{4(n+2)^2(2n-1)}{%
(n-1)(2n+1)}U(X,X)\geq k_0 g(X,X).
\end{equation}
When $n>1$ as shown in \cite{IPV1} it implies that for any eigenvalue $\lambda$ of the sub-Laplacian $\triangle$ we have  the lower bound
\begin{equation*}
\lambda \ge \frac{n}{n+2}k_0.
\end{equation*}
In fact,  both the seven and higher dimensional cases follow from
the positivity of the $P$-function.

A stronger condition than the one required in Theorem
\ref{mainpan} is the assumption that the $P$-function of any
smooth function is non-negative  in which case we say that the
$C-$operator of $M$ is non-negative. Here $C$ is a fourth-order
differential operator  on $M$ (independent of $f$!) defined by
\[
Cf =-\nabla^* P_f=(\nabla_{e_a} P_f)\,(e_a).
\]
It turns out that this stronger condition holds on any  compact qc
manifold of dimension at least eleven, see Theorem~\ref{pan}. On
the other hand we shall prove in Proposition~\ref{d3n1} that on a
seven dimensional qc-Einstein manifold of constant non-negative
scalar curvature the $P-$function of any eigenfunction of the
sub-Laplacian is non-negative. This fact implies the following
\begin{cor}\label{c:LOin7D}
Let $(M,g,\mathbb{Q})$ be a compact quaternionic contact manifold
of dimension seven which is qc-Einstein and of constant
normalized positive scalar curvature $S=2$. Then for any
eigenvalue $\lambda$ of the sub-Laplacian $\triangle$ we have  the
inequality
\begin{equation}\label{71}
\lambda \ge 4.
\end{equation}
Furthermore, $\lambda=4$ is an eigenvalue of the sub-Laplacian if and only if $M$ is the unit seven dimensional 3-Sasakian sphere.

In particular, on a seven dimensional 3-Sasakian manifold any
eiqnfunction $\lambda$ of the sub-Laplacian satisfies the
inequality \eqref{71} and the equality is attained only on the
seven dimensional 3-Sasakian sphere.
\end{cor}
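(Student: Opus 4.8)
The plan is to realize Corollary~\ref{c:LOin7D} as the conjunction of Theorem~\ref{mainpan}, the non-negativity of the $P$-function supplied by Proposition~\ref{d3n1}, and a qc Obata-type rigidity statement for the equality case.

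First I would extract the numerical bound. Since $M$ is qc-Einstein the torsion vanishes, $T^0=0$, so under the normalization $S=2$ the left-hand side of \eqref{7Dcondm-app} collapses to
\begin{equation*}
6Sg(X,X)+10T^0(X,X)=12\,g(X,X),
\end{equation*}
and the Lichnerowicz type inequality \eqref{7Dcondm-app} holds with $k_0=12$. As $M$ is a seven dimensional qc-Einstein manifold whose constant scalar curvature $S=2$ is non-negative, Proposition~\ref{d3n1} applies and shows that $-\int_M P_f\vol\ge 0$ for every eigenfunction $f$ of the sub-Laplacian; thus the $P$-function hypothesis of Theorem~\ref{mainpan} is met. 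Feeding $k_0=12$ into Theorem~\ref{mainpan} gives $\lambda\ge \tfrac13 k_0=4$, which is the inequality \eqref{71}.

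Next I would settle the characterization of equality. For sufficiency I would verify directly that the standard seven dimensional 3-Sasakian sphere carries eigenfunctions of the sub-Laplacian with eigenvalue $4$, for instance the restrictions to the sphere of the ambient linear coordinate functions, so that $\lambda=4$ is attained. For necessity I would trace equality back through the integrated Bochner-type identity on which Theorem~\ref{mainpan} rests: when $\lambda=4$ the inequality in that proof is saturated, which forces the vanishing $-\int_M P_f\vol=0$ together with the pointwise saturation of every Cauchy--Schwarz estimate used to bound the horizontal Hessian of the extremal eigenfunction $f$. These equalities turn into an overdetermined system for $f$, namely the qc counterpart of the classical Obata equation relating the horizontal Hessian of $f$ to $f$ itself, and the qc Obata-type rigidity theorem (the extremal characterization in the qc-Einstein, constant scalar curvature setting mentioned in the introduction) identifies such an $(M,f)$ with the round seven dimensional 3-Sasakian sphere.

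Finally, the \emph{in particular} assertion is immediate: a seven dimensional 3-Sasakian manifold is qc-Einstein with constant positive qc-scalar curvature normalized to $S=2$, so the preceding two steps apply verbatim, yielding \eqref{71} with equality exactly on the 3-Sasakian sphere. The main obstacle is the necessity half of the equality case, namely propagating equality through the $P$-function Bochner identity to recover the precise overdetermined system satisfied by the extremal eigenfunction, and then invoking the qc Obata rigidity to upgrade this analytic degeneracy into the geometric conclusion that $M$ is isometric to the round sphere.
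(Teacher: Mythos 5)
Your derivation of the bound $\lambda\ge 4$ is exactly the paper's: qc-Einstein forces $T^{0}=0$, so \eqref{7Dcondm-app} holds with $k_{0}=12$, Proposition~\ref{d3n1} supplies the $P$-function hypothesis, and Theorem~\ref{mainpan} gives \eqref{71}. The sufficiency direction is also unproblematic (the paper points to \cite{IMV2} for explicit eigenfunctions with eigenvalue $4$ on the sphere). The genuine gap is in your necessity argument. You correctly observe that saturation in the proof of Theorem~\ref{mainpan} forces \eqref{extr}, hence the Obata-type horizontal Hessian equation
\begin{equation*}
(\nabla ^{2}f)(X,Y)=-\frac{k_{0}}{3}f\,g(X,Y)-\sum_{s=1}^{3}df(\xi _{s})\,\omega _{s}(X,Y),
\end{equation*}
but you then invoke ``the qc Obata-type rigidity theorem (the extremal characterization \ldots mentioned in the introduction)'' to conclude that $(M,f)$ is the round 3-Sasakian sphere. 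No such citable theorem exists: that rigidity statement \emph{is} the corollary being proven --- the Obata-type results referenced in the introduction (\cite{IVO}, \cite{LW1}, \cite{IV3}) are CR results, not qc ones --- so your argument is circular at its decisive step. Passing from the overdetermined system to the identification of $M$ with the sphere is precisely the hard part, and your proposal supplies no mechanism for it.

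The paper closes this gap by reducing to the classical \emph{Riemannian} Obata theorem rather than by integrating the qc Obata equation directly. Since $M$ is qc-Einstein with $S=2$, \cite[Theorem 1.3]{IMV} makes it locally 3-Sasakian; one then extends $g$ to a Riemannian metric on $M$ by declaring $V\perp H$ and $g(\xi _{s},\xi _{k})=\delta _{sk}$, so that $M$ is Riemannian Einstein with $Ric^{g}=6g$ by \cite{Kas}, and the Riemannian Lichnerowicz theorem \cite{Li} gives $\mu \ge 7$ for the first eigenvalue $\mu$ of the Riemannian Laplacian. In the opposite direction, the saturation condition $(\nabla ^{2}f)_{[-1][s]}=0$ from \eqref{extr}, fed into the first two identities of \eqref{e:3eqs in dim 7} together with \eqref{xiii}, yields $\int_{M}\sum_{s=1}^{3}(df(\xi _{s}))^{2}\vol=3$ for the normalized extremal eigenfunction, whence $\mu \le \lambda +3=7$ by the comparison \eqref{eigenv} between the Riemannian Laplacian and the sub-Laplacian. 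Thus $\mu =7$, and the Riemannian Obata theorem \cite{O3} identifies $(M,g)$ with the unit sphere $S^{7}(1)$, hence $(M,g,\mathbb{Q})$ with the 3-Sasakian sphere. Any correct completion of your outline needs some such bridge from the Hessian equation to an actual rigidity statement; the passage through the extended metric is the paper's way of building it.
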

We note that in \cite{IMV2} is given an explicit formula for the
eigenfunctions of the above eigenvalue, see also \cite{ACB}.

\begin{conv}
\label{conven} \hfill\break\vspace{-15pt}
\begin{enumerate}
\item[a)] We shall use $X,Y,Z,U$ to denote horizontal vector fields, i.e. $%
X,Y,Z,U\in H$.

\item[b)] $\{e_1,\dots,e_{4n}\}$ denotes a local orthonormal basis of the
horizontal space $H$.

\item[c)] The summation convention over repeated vectors from the basis $%
\{e_1,\dots,e_{4n}\}$ will be used. For example, for a (0,4)-tensor $P$, the
formula $k=P(e_b,e_a,e_a,e_b)$ means
\begin{equation*}
k=\sum_{a,b=1}^{4n}P(e_b,e_a,e_a,e_b);
\end{equation*}

\item[d)] The triple $(i,j,k)$ denotes any cyclic permutation of $(1,2,3)$.

\item[e)] $s$ will be any number from the set $\{1,2,3\}, \quad
s\in\{1,2,3\} $.
\end{enumerate}
\end{conv}

\section{Quaternionic contact manifolds}

Quaternionic contact manifolds were introduced in \cite{Biq1}.  We also refer to \cite{IMV} and \cite{IV} for results and further background.

\subsection{Quaternionic contact structures and the Biquard connection}\label{ss:Biq conn}

A quaternionic contact (qc) manifold $(M, g, \mathbb{Q})$ is a $4n+3$%
-dimensional manifold $M$ with a codimension three distribution $H$ equipped with an $Sp(n)Sp(1)$ structure. Explicitly, $H$ is the kernel of a local 1-form $\eta=(\eta_1,\eta_2,\eta_3)$ with values in $\mathbb{R}^3$ together with a compatible Riemannian metric $g$ and a rank-three bundle $\mathbb{Q}$ consisting of endomorphisms of $H$ locally generated by three almost complex structures $I_1,I_2,I_3$ on $H$ satisfying the identities of the imaginary
unit quaternions. Thus, we have $I_1I_2=-I_2I_1=I_3, \quad I_1I_2I_3=-id_{|_H}$ which are hermitian compatible with the metric $g(I_s.,I_s.)=g(.,.)$ and the following
compatibility condition holds $$ 2g(I_sX,Y)\ =\ d\eta_s(X,Y), \quad
X,Y\in H.$$
On a qc manifold of dimension $(4n+3)>7$ with a fixed metric $g$ on the horizontal distribution $H$ there exists a canonical
connection defined in \cite{Biq1}.  In fact, Biquard showed that there  is a unique connection $\nabla$ with torsion $T$  and a unique supplementary subspace $V$ to $H$ in $TM$, such that:
\begin{enumerate}[(i)]
\item $\nabla$ preserves the decomposition $H\oplus V$ and the $%
Sp(n)Sp(1)$ structure on $H$, i.e. $\nabla g=0, \nabla\sigma \in\Gamma(%
\mathbb{Q})$ for a section $\sigma\in\Gamma(\mathbb{Q})$, and its torsion on
$H$ is given by $T(X,Y)=-[X,Y]_{|V}$;
\item for $\xi\in V$, the endomorphism $T(\xi,.)_{|H}$ of $H$ lies in $%
(sp(n)\oplus sp(1))^{\bot}\subset gl(4n)$;
\item the connection on $V$ is induced by the natural identification $%
\varphi$ of $V$ with the subspace $sp(1)$ of the endomorphisms of $H$, i.e. $%
\nabla\varphi=0$.
\end{enumerate}

In ii), the inner product $<,>$ of $End(H)$ is given by $<A,B> = {\
\sum_{i=1}^{4n} g(A(e_i),B(e_i)),}$ for $A, B \in End(H)$. We shall call the
above connection \emph{the Biquard connection}. When the dimension of $M$ is
at least eleven \cite{Biq1} also described the supplementary distribution $V$%
, which is (locally) generated by the so called Reeb vector fields $%
\{\xi_1,\xi_2,\xi_3\}$ determined by
\begin{equation}  \label{bi1}
\begin{aligned} \eta_s(\xi_k)=\delta_{sk}, \qquad (\xi_s\lrcorner
d\eta_s)_{|H}=0,\\ (\xi_s\lrcorner d\eta_k)_{|H}=-(\xi_k\lrcorner
d\eta_s)_{|H}, \end{aligned}
\end{equation}
where $\lrcorner$ denotes the interior multiplication.

If the dimension of $%
M $ is seven Duchemin shows in \cite{D} that if we assume, in addition, the
existence of Reeb vector fields as in \eqref{bi1}, then the Biquard result holds. \emph{Henceforth, by a qc structure in dimension $7$ we shall mean a qc
structure satisfying \eqref{bi1}}. This implies the existence of the connection with properties (i), (ii) and (iii) above.

The fundamental 2-forms $\omega_s$ of the quaternionic structure $Q$ are
defined by
\begin{equation}  \label{thirteen}
2\omega_{s|H}\ =\ \, d\eta_{s|H},\qquad \xi\lrcorner\omega_s=0,\quad \xi\in
V,
\end{equation}
and the torsion restricted to $H$ has the form
\begin{equation}  \label{torha}
T(X,Y)=-[X,Y]_{|V}=2\omega_1(X,Y)\xi_1+2\omega_2(X,Y)\xi_2+2\omega_3(X,Y)%
\xi_3.
\end{equation}

\subsection{Invariant decompositions}

Any endomorphism $\Psi$ of $H$ can be decomposed with respect to the
quaternionic structure $(\mathbb{Q},g)$ uniquely into four $Sp(n)$-invariant
parts 
$\Psi=\Psi^{+++}+\Psi^{+--}+\Psi^{-+-}+\Psi^{--+},$ 
where $\Psi^{+++}$ commutes with all three $I_i$, $\Psi^{+--}$ commutes with
$I_1$ and anti-commutes with the others two and etc. The two $Sp(n)Sp(1)$-invariant components
\index{$Sp(n)Sp(1)$-invariant components!$\Psi_{[3]}$}
\index{$Sp(n)Sp(1)$-invariant components!$\Psi_{[-1]}$} are given by
\begin{equation*}
\Psi_{[3]}=\Psi^{+++},\quad \Psi_{[-1]}=\Psi^{+--}+\Psi^{-+-}+\Psi^{--+}.
\end{equation*}
They are the projections on the eigenspaces of the Casimir operator
\begin{equation}  \label{e:cross}
\Upsilon =\ I_1\otimes I_1\ +\ I_2\otimes I_2\ +\ I_3\otimes I_3,
\end{equation}
corresponding, respectively, to the eigenvalues $3$ and $-1$, see \cite{CSal}. Note here that each of the three 2-forms $%
\omega_s$ belongs to its [-1]-component, $\omega_s=\omega_{s[-1]}$ and
constitute a basis of the lie algebra $sp(1)$. In particular, using that $\left\{ \frac{1}{2\sqrt{n}}\omega _{s}\right\} $ is an
orthonormal set in $\Psi _{\lbrack -1]}$ we have
\begin{equation}
|(\nabla ^{2}f)_{[-1]}|^{2}\geq \frac{1}{4n}\sum_{s=1}^{3}\left[ g\left(
\nabla ^{2}f,\omega _{s}\right) \right] ^{2}  \label{coshy1}
\end{equation}%
while a projection on $\left\{ \frac{1}{2\sqrt{n}}g\right\} $ gives
\begin{equation}
|(\nabla ^{2}f)_{[3]}|^{2}\geq \frac{1}{4n}(\triangle f)^{2}.  \label{coshy3}
\end{equation}%
If $n=1$ then the space of symmetric endomorphisms commuting with all $I_s$ is 1-dimensional, i.e., the [3]-component of any symmetric endomorphism $\Psi$ on $H$ is proportional to the identity, $\Psi_{3}=-\frac{tr\Psi}{4}Id_{|H}$.  Thus, when $n=1$ we have the following fact.
\begin{lemma}\label{l:inv_decomp_7D}
The space $\Psi _{\lbrack 3]}$ is four dimensional and the symmetric
tensors in it are proportional to the metric. The space $\Psi _{\lbrack -1]}$
is twelve dimensional, in which lies the three dimensional space of the
2-forms $\omega _{i}$. The latter determines the anti-symmetric part of the $%
\Psi _{\lbrack -1]}$ component.
\end{lemma}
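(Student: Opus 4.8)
The plan is to read the statement as a pointwise fact about the real $Sp(1)Sp(1)$-representation $\mathrm{End}(H)$ in the case $n=1$, where $H$ is a four dimensional real vector space, so $\dim_{\mathbb{R}}\mathrm{End}(H)=16$. First I would recall that, by the very definition of the decomposition, $\Psi_{[3]}=\Psi^{+++}$ is the space of endomorphisms commuting with each of $I_1,I_2,I_3$, i.e. the commutant of the subalgebra of $\mathrm{End}(H)$ generated by $1,I_1,I_2,I_3$. For $n=1$ this subalgebra is a copy of the quaternions acting on $H\cong\mathbb{H}$ by left multiplication (the regular representation), which is irreducible over $\mathbb{R}$; its commutant consists of the right multiplications and is again isomorphic to $\mathbb{H}$ (equivalently, by Schur's lemma the endomorphism algebra of an irreducible real representation of quaternionic type is $\mathbb{H}$). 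Hence $\dim_{\mathbb{R}}\Psi_{[3]}=4$, and since $\mathrm{End}(H)=\Psi_{[3]}\oplus\Psi_{[-1]}$ is the decomposition into the $3$- and $(-1)$-eigenspaces of $\Upsilon$, this already forces $\dim_{\mathbb{R}}\Psi_{[-1]}=12$.

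For the symmetric tensors in $\Psi_{[3]}$ I would argue directly: if $\Psi\in\Psi_{[3]}$ is $g$-symmetric, then each of its (real) eigenspaces is invariant under every $I_s$, because $\Psi$ commutes with the $I_s$; such a subspace is therefore a quaternionic submodule of $H$, so its real dimension is a multiple of four. As $\dim_{\mathbb{R}}H=4$, there is a single eigenspace, equal to $H$, whence $\Psi=\lambda\,\mathrm{id}_{|H}$ for some $\lambda\in\mathbb{R}$. Identifying endomorphisms with $(0,2)$-tensors via $g$ turns $\lambda\,\mathrm{id}_{|H}$ into $\lambda g$, which shows that the symmetric tensors in $\Psi_{[3]}$ are precisely the multiples of the metric, recovering the remark preceding the lemma.

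Finally, for the antisymmetric structure I would use that $g$-compatibility makes each $I_s$ skew-symmetric ($I_s^{*}=-I_s$) and that $I_s$ commutes with itself but anticommutes with the other two, so $I_1,I_2,I_3$ are of types $+--,\,-+-,\,--+$ respectively and thus lie in $\Psi_{[-1]}$; under the metric these correspond exactly to the fundamental $2$-forms $\omega_1,\omega_2,\omega_3$, which are linearly independent and therefore span a three dimensional subspace of $\Psi_{[-1]}$. To see that this is the \emph{entire} antisymmetric part of $\Psi_{[-1]}$, I would recall that the skew endomorphisms of $H\cong\mathbb{R}^4$ form $\mathfrak{so}(4)\cong\mathfrak{sp}(1)\oplus\mathfrak{sp}(1)$ of dimension six, one summand being $\langle I_1,I_2,I_3\rangle$ and the other the imaginary right multiplications, which are precisely the skew elements of the commutant $\Psi_{[3]}$. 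Consequently the skew part of $\Psi_{[3]}$ is three dimensional and the skew part of $\Psi_{[-1]}$ is the complementary three dimensional summand $\langle\omega_1,\omega_2,\omega_3\rangle$, as claimed. The one step that must be handled with care is exactly this bookkeeping: separating the two copies of $\mathfrak{sp}(1)$ inside $\mathfrak{so}(4)$ and confirming that the $\omega_s$ sit in the $(-1)$-eigenspace of $\Upsilon$ rather than the $3$-eigenspace; once the commutant is correctly identified, the remainder is pure dimension counting.
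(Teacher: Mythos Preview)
Your argument is correct. Note, however, that the paper does not actually give a proof of this lemma: it is stated as an elementary fact, the only justification being the sentence immediately preceding it, namely that for $n=1$ the symmetric endomorphisms commuting with all $I_s$ form the one-dimensional space of multiples of the identity. Everything else (the four-dimensionality of $\Psi_{[3]}$, the twelve-dimensionality of $\Psi_{[-1]}$, and the identification of its skew part with $\langle\omega_1,\omega_2,\omega_3\rangle$) is left to the reader as straightforward linear algebra in $\mathrm{End}(\mathbb{R}^4)$.

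Your write-up supplies these details via a cleaner representation-theoretic route: identifying $\Psi_{[3]}$ as the commutant of the left $\mathbb{H}$-action (hence $\cong\mathbb{H}$ by Schur), and using the splitting $\mathfrak{so}(4)\cong\mathfrak{sp}(1)_L\oplus\mathfrak{sp}(1)_R$ to separate the skew parts of $\Psi_{[3]}$ and $\Psi_{[-1]}$. This is more conceptual than what the paper sketches and makes transparent why the antisymmetric part of $\Psi_{[-1]}$ is exactly three dimensional rather than merely containing the $\omega_s$; the paper's implicit approach would have you check the commutation relations and count dimensions by hand.
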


\subsection{The torsion tensor}

The torsion endomorphism $T_{\xi }=T(\xi ,\cdot ):H\rightarrow H,\quad \xi
\in V$ will be decomposed into its symmetric part $T_{\xi }^{0}$ and skew-symmetric part $b_{\xi
},T_{\xi }=T_{\xi }^{0}+b_{\xi }$. Biquard showed \cite{Biq1} that the
torsion $T_{\xi }$ is completely trace-free, $tr\,T_{\xi }=tr\,T_{\xi }\circ
I_{s}=0$, its symmetric part has the properties $T_{\xi
_{i}}^{0}I_{i}=-I_{i}T_{\xi _{i}}^{0}\quad I_{2}(T_{\xi
_{2}}^{0})^{+--}=I_{1}(T_{\xi _{1}}^{0})^{-+-},\quad I_{3}(T_{\xi
_{3}}^{0})^{-+-}=I_{2}(T_{\xi _{2}}^{0})^{--+},\quad I_{1}(T_{\xi
_{1}}^{0})^{--+}=I_{3}(T_{\xi _{3}}^{0})^{+--}$, where the upperscript $+++$
means commuting with all three $I_{i}$, $+--$ indicates commuting with $%
I_{1} $ and anti-commuting with the other two and etc. The skew-symmetric
part can be represented as $b_{\xi _{i}}=I_{i}U$, where $U$ is a traceless
symmetric (1,1)-tensor on $H$ which commutes with $I_{1},I_{2},I_{3}$.
Therefore we have $T_{\xi _{i}}=T_{\xi _{i}}^{0}+I_{i}U$.  If $n=1$ then the tensor $U$ vanishes identically, $U=0$, and the torsion is
a symmetric tensor, $$T_{\xi }=T_{\xi }^{0}.$$  The two $Sp(n)Sp(1)$-invariant trace-free symmetric 2-tensors on $H$
\begin{equation}\label{Tcompnts}
T^0(X,Y)=
g((T_{\xi_1}^{0}I_1+T_{\xi_2}^{0}I_2+T_{ \xi_3}^{0}I_3)X,Y) \ \text{ and }\  U(X,Y)
=g(uX,Y)
\end{equation}
were introduced in \cite{IMV}  and enjoy the properties
\begin{equation}  \label{propt}
\begin{aligned} T^0(X,Y)+T^0(I_1X,I_1Y)+T^0(I_2X,I_2Y)+T^0(I_3X,I_3Y)=0, \\
U(X,Y)=U(I_1X,I_1Y)=U(I_2X,I_2Y)=U(I_3X,I_3Y). \end{aligned}
\end{equation}
From \cite[Proposition~2.3]{IV} we have
\begin{equation}  \label{need}
4T^0(\xi_s,I_sX,Y)=T^0(X,Y)-T^0(I_sX,I_sY),
\end{equation}
hence, taking into account \eqref{need} it follows
\begin{equation}  \label{need1}
T(\xi_s,I_sX,Y)=T^0(\xi_s,I_sX,Y)+g(I_suI_sX,Y)=\frac14\Big[%
T^0(X,Y)-T^0(I_sX,I_sY)\Big]-U(X,Y).
\end{equation}

Any 3-Sasakian manifold has zero torsion endomorphism, and the converse is
true if in addition the qc scalar curvature (see \eqref{qscs}) is a positive
constant \cite{IMV}.

\subsection{Torsion and curvature}

Let $R=[\nabla,\nabla]-\nabla_{[\ ,\ ]}$ be the curvature tensor of $\nabla$
and the dimension is $4n+3$. We denote the curvature tensor of type (0,4)
and the torsion tensor of type (0,3) by the same letter, $%
R(A,B,C,D):=g(R(A,B)C,D),\quad T(A,B,C):=g(T(A,B),C)$, $A,B,C,D \in
\Gamma(TM)$. The Ricci tensor, the normalized scalar curvature and the Ricci
$2$-forms of the Biquard connection, called \emph{qc-Ricci tensor} $Ric$,
\emph{normalized qc-scalar curvature} $S$ and \emph{qc-Ricci forms} $\rho_s,
\tau_s$, respectively, are defined by
\begin{equation}  \label{qscs}
\begin{aligned} & Ric(A,B)=R(e_b,A,B,e_b),\quad
8n(n+2)S=R(e_b,e_a,e_a,e_b),\quad \rho_s(A,B)=\frac1{4n}R(A,B,e_a,I_se_a),
\\
& \tau_s(A,B)=\frac1{4n}R(e_a,I_se_a,A,B,),\quad
\zeta_s(A,B)=\frac1{4n}R(e_a,A,B,I_se_a). \end{aligned}
\end{equation}

\begin{dfn}
A qc structure is said to be qc Einstein if the horizontal qc-Ricci tensor
is a scalar multiple of the metric,
\begin{equation*}
Ric(X,Y)=2(n+2)Sg(X,Y).
\end{equation*}
\end{dfn}
As shown in \cite{IMV}, see
also \cite{IMV1,IV,IPV1} for a different proof, the qc Einstein condition is equivalent to the vanishing of the torsion
endomorphism of the Biquard connection. In this case $S$ is constant and the
vertical distribution is integrable provided $n>1$. It is also worth recalling that the horizontal qc-Ricci tensors and the integrability of the vertical distribution can be
expressed in terms of the torsion of the Biquard connection \cite{IMV} (see
also \cite{IMV1,IV,IPV1}). For example, we have

\begin{equation}  \label{sixtyfour}
\begin{aligned} & Ric(X,Y) =(2n+2)T^0(X,Y)+(4n+10)U(X,Y)+2(n+2)Sg(X,Y),\\ &
\zeta_s(X,I_sY)=\frac{2n+1}{4n}T^0(X,Y)+\frac1{4n}T^0(I_sX,I_sY)+%
\frac{2n+1}{2n}U(X,Y)+\frac{S}2g(X,Y), \\ & T(\xi_{i},\xi_{j})
=-S\xi_{k}-[\xi_{i},\xi_{j}]_{H}, \qquad S = -g(T(\xi_1,\xi_2),\xi_3),\\ &
g(T(\xi_i,\xi_j),X)
=-\rho_k(I_iX,\xi_i)=-\rho_k(I_jX,\xi_j)=-g([\xi_i,\xi_j],X). \end{aligned}
\end{equation}
Note that for $n=1$ the above formulas hold with $U=0$.

We shall also need the general formula for the curvature \cite{IV,IV2}
\begin{multline}
R(\xi _{i},X,Y,Z)=-(\nabla _{X}U)(I_{i}Y,Z)+\omega _{j}(X,Y)\rho
_{k}(I_{i}Z,\xi _{i})-\omega _{k}(X,Y)\rho _{j}(I_{i}Z,\xi _{i})
\label{d3n5} \\
-\frac{1}{4}\Big[(\nabla _{Y}T^{0})(I_{i}Z,X)+(\nabla _{Y}T^{0})(Z,I_{i}X)%
\Big]+\frac{1}{4}\Big[(\nabla _{Z}T^{0})(I_{i}Y,X)+(\nabla
_{Z}T^{0})(Y,I_{i}X)\Big] \\
-\omega _{j}(X,Z)\rho _{k}(I_{i}Y,\xi _{i})+\omega _{k}(X,Z)\rho
_{j}(I_{i}Y,\xi _{i})-\omega _{j}(Y,Z)\rho _{k}(I_{i}X,\xi _{i})+\omega
_{k}(Y,Z)\rho _{j}(I_{i}X,\xi _{i}),
\end{multline}%
where the Ricci two forms are given by, cf. \cite[Theorem 3.1]{IV} or \cite[Theorem4.3.11]{IV2}
\begin{equation}
\begin{aligned}
6(2n+1)\rho_s(\xi_s,X)=(2n+1)X(S)+\frac12(%
\nabla_{e_a}T^0)[(e_a,X)-3(I_se_a,I_sX)]-2(\nabla_{e_a}U)(e_a,X),\\
6(2n+1)\rho_i(\xi_j,I_kX)=(2n-1)(2n+1)X(S)-\frac12(%
\nabla_{e_a}T^0)[(4n+1)(e_a,X)+3(I_ie_a,I_iX)]\\-4(n+1)(%
\nabla_{e_a}U)(e_a,X) .\end{aligned}  \label{d3n6}
\end{equation}

\subsection{The Ricci identities}
We shall use repeatedly the following Ricci identities of order two and
three, see also \cite{IV}. 
Let $\xi_i$, $i=1,2,3$ be the Reeb vector fields, $X, Y\in H$ and $f$ a
smooth function on the qc manifold $M$ with $\nabla f$ its horizontal
gradient of $f$, $g(\nabla f,X)=df(X)$. We have the following Ricci identities
\begin{equation}  \label{boh2}
\begin{aligned}
& \nabla^2f
(X,Y)-\nabla^2f(Y,X)=-2\sum_{s=1}^3\omega_s(X,Y)df(\xi_s)\\ & \nabla^2f
(X,\xi_s)-\nabla^2f(\xi_s,X)=T(\xi_s,X,\nabla f)\\
& \nabla^3 f
(X,Y,Z)-\nabla^3 f(Y,X,Z)=-R(X,Y,Z,\nabla f) - 2\sum_{s=1}^3
\omega_s(X,Y)\nabla^2f (\xi_s,Z)\\
& \nabla ^{3}f(\xi _{i},X,Y) =\nabla ^{3}f(X,Y,\xi _{i})-\nabla ^{2}f\left(
T\left( \xi _{i},X\right) ,Y\right) -\nabla ^{2}f\left( X,T\left( \xi
_{i},Y\right) \right) -df\left( \left( \nabla _{X}T\right) \left( \xi
_{i},Y\right) \right)\\ &\hskip4in -R(\xi _{i},X,Y,\nabla f) .
\end{aligned}
\end{equation}
 In particular we have
\begin{equation}  \label{xi1}
g(\nabla^2f , \omega_s) =\nabla^2f(e_a,I_se_a)=-4ndf(\xi_s).
\end{equation}
Now, Lemma \ref{l:inv_decomp_7D} and the Ricci identities show that for any smooth function $f $ on a seven dimensional qc manifold we have
\begin{equation}\label{hess7d}
\begin{aligned}
& (\nabla ^{2}f)_{[3]}(X,Y)=-\frac{\triangle f}{4}g(X,Y),\qquad (\nabla
^{2}f)_{[3][a]}(X,Y)=\frac{1}{2}\left[ (\nabla ^{2}f)_{[3]}(X,Y)-(\nabla
^{2}f)_{[3]}(Y,X)\right] =0 \\
& (\nabla ^{2}f)_{[-1][a]}(X,Y)=-df(\xi _{i})\omega _{i}(X,Y).
\end{aligned}
\end{equation}

\subsection{The horizontal divergence theorem and qc-normal frames}

Let $(M, g,\mathbb{Q})$ be a qc manifold of dimension $4n+3\geq 7$ For a fixed
local 1-form $\eta$ and a fix $s\in \{1,2,3\}$ the form
\begin{equation}  \label{e:volumeform}
Vol_{\eta}=\eta_1\wedge\eta_2\wedge\eta_3\wedge\omega_s^{2n}
\end{equation}
is a locally defined volume form. Note that $Vol_{\eta}$ is independent on $%
s $ as well as it is independent on the local one forms $\eta_1,\eta_2,%
\eta_3 $. Hence it is globally defined volume form denoted with $\,
Vol_{\eta}$.

We define the (horizontal) divergence of a horizontal vector
field/one-form $\sigma\in\Lambda^1\, (H)$ as
\begin{equation}  \label{e:divergence}
\nabla^*\, \sigma\ =-tr|_{H}\nabla\sigma=\ -\nabla \sigma(e_a,e_a).
\end{equation}
The integration by parts formula from \cite{IMV}, see also \cite{Wei}, takes the form
\begin{equation}\label{div}
\int_M (\nabla^*\sigma)\,\vol\ =\ 0.
\end{equation}

Finally, we recall that an orthonormal frame\newline
\centerline{$\{e_1,e_2=I_1e_1,e_3=I_2e_1,e_4=I_3e_1,\dots,
e_{4n}=I_3e_{4n-3}, \xi_1, \xi_2, \xi_3 \}$}\newline
is a qc-normal frame (at a point) if the connection 1-forms of the Biquard
connection vanish (at that point). Lemma~4.5 in \cite{IMV} asserts that a
qc-normal frame exists at each point of a qc manifold.

\section{The $P-$form}
Let $(M,g,\mathbb{Q})$ be a compact quaternionic contact
manifold of dimension $4n+3$ and $f$ a smooth function on $M$.

\begin{dfn}
\label{d:def P} For a fixed $f$ we define  a  one form $P\equiv P_f \equiv P[f]$ on $M$, which we call the $P-$form of $f$, by the following equation
\begin{equation}
\begin{aligned} P_f(X) =&\nabla ^{3}f(X,e_{b},e_{b})+\sum_{t=1}^{3}\nabla
^{3}f(I_{t}X,e_{b},I_{t}e_{b})-4nSdf(X)+4nT^{0}(X,\nabla
f)-\frac{8n(n-2)}{n-1}U(X,\nabla f),\\ &\hskip4.5in \text{if $n>1$},\\
P_f(X) =&\nabla
^{3}f(X,e_{b},e_{b})+\sum_{t=1}^{3}\nabla
^{3}f(I_{t}X,e_{b},I_{t}e_{b})-4Sdf(X)+4T^{0}(X,\nabla f),\quad\text{if $n=1$}. \end{aligned}
\label{e:def P}
\end{equation}%
The $P-$function of  $f$ is the function $P_f(\nabla f)$. Finally, the $C-$operator is the fourth-order differential operator  on $M$ (independent of $f$!) defined by
\begin{equation}  \label{e:def C}
Cf =-\nabla^* P_f=(\nabla_{e_a} P_f)\,(e_a).
\end{equation}
We say that the $P-$function of  $f$ is non-negative  if its integral exists and is non-positive
\begin{equation}  \label{e:non-negative Paneitz}
\int_M f\cdot Cf \vol= -\int_M
P_f(\nabla f)\vol\geq 0.
\end{equation}
If \eqref{e:non-negative Paneitz} holds for any smooth function of compact support we say that the $C-$operator is non-negative.
\end{dfn}

One of the key identities which relates  the P-function and the Bochner formula is given in the following
\begin{lemma}\label{gr3} On a qc manifold of dimension $4n+3$ we have
\begin{equation}  \label{e:gr3}
\sum_{s=1}^3\nabla^{2}f(\xi_s,I_sX)=\frac{1}{4n}\sum_{s=1}^3%
\nabla^{3}f(I_sX,I_se_a,e_a)-\sum_{s=1}^3T(\xi_s,I_sX,\nabla f).
\end{equation}
In addition, if the manifold is compact, then the next integral formula
holds
\begin{equation}  \label{e:gr4}
\int_M\sum_{s=1}^3\nabla^2f(\xi_s,I_s\nabla f)\vol=\int_M\Big[-%
\frac{1}{4n}P_n(\nabla f)-\frac{1}{4n}(\triangle f)^2-S|\nabla f|^2+\frac{%
(n+1)}{n-1}U(\nabla f,\nabla f)\Big]\vol.
\end{equation}
\end{lemma}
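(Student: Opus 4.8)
The plan is to prove the pointwise identity \eqref{e:gr3} first, and then to obtain the integral formula \eqref{e:gr4} by setting $X=\nabla f$, integrating against $\vol$, and reorganizing the resulting terms into the $P$-form plus an integration by parts. The whole argument is local tensor bookkeeping anchored at a qc-normal frame.

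For \eqref{e:gr3} I would start from the trace identity \eqref{xi1}. Relabelling the orthonormal basis by $f_a=I_se_a$ (so $e_a=-I_sf_a$) turns it into the frame-independent scalar identity $\nabla^2f(I_se_a,e_a)=-\nabla^2f(e_a,I_se_a)=4n\,df(\xi_s)$. Since both sides are genuine functions on $M$, I may differentiate them in the direction $I_sX$. Working at a point in a qc-normal frame, where $\nabla e_a=\nabla\xi_s=0$ and $\nabla I_s=0$, all frame terms drop and the left side becomes $(\nabla_{I_sX}\nabla^2f)(I_se_a,e_a)=\nabla^3f(I_sX,I_se_a,e_a)$, while the right side becomes $4n\,\nabla^2f(I_sX,\xi_s)$. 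Hence $\nabla^2f(I_sX,\xi_s)=\tfrac1{4n}\nabla^3f(I_sX,I_se_a,e_a)$. The second-order Ricci identity in \eqref{boh2} gives $\nabla^2f(I_sX,\xi_s)=\nabla^2f(\xi_s,I_sX)+T(\xi_s,I_sX,\nabla f)$; substituting and summing over $s$ yields \eqref{e:gr3}.

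For the integral formula I would put $X=\nabla f$ in \eqref{e:gr3} and integrate. The one delicate manipulation is to recognize the contracted third-derivative term as $-1$ times the middle term of the $P$-form: the same relabelling $f_a=I_se_a$ gives $\sum_s\nabla^3f(I_s\nabla f,I_se_a,e_a)=-\sum_t\nabla^3f(I_t\nabla f,e_b,I_te_b)$, which is exactly the sum appearing in \eqref{e:def P}. Solving \eqref{e:def P} for that sum (case $n>1$) and dividing by $4n$ rewrites $\tfrac1{4n}\sum_s\nabla^3f(I_s\nabla f,I_se_a,e_a)$ as $-\tfrac1{4n}P_f(\nabla f)+\tfrac1{4n}\nabla^3f(\nabla f,e_b,e_b)-S|\nabla f|^2+T^0(\nabla f,\nabla f)-\tfrac{2(n-2)}{n-1}U(\nabla f,\nabla f)$.

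Two terms then remain. First, since $\nabla^2f(e_b,e_b)=-\triangle f$, differentiation in a normal frame gives $\nabla^3f(\nabla f,e_b,e_b)=-g(\nabla(\triangle f),\nabla f)$, and the divergence theorem \eqref{div} produces $\int_M\tfrac1{4n}\nabla^3f(\nabla f,e_b,e_b)\vol=-\tfrac1{4n}\int_M(\triangle f)^2\vol$. Second, for the torsion contribution $-\sum_sT(\xi_s,I_s\nabla f,\nabla f)$ inherited from \eqref{e:gr3}, I would apply \eqref{need1} with $X=Y=\nabla f$ together with the trace relation $\sum_sT^0(I_s\nabla f,I_s\nabla f)=-T^0(\nabla f,\nabla f)$ from \eqref{propt} to get $\sum_sT(\xi_s,I_s\nabla f,\nabla f)=T^0(\nabla f,\nabla f)-3U(\nabla f,\nabla f)$. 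Assembling all contributions, the $T^0$-terms cancel and the $U$-coefficients combine as $-\tfrac{2(n-2)}{n-1}+3=\tfrac{n+1}{n-1}$, giving precisely \eqref{e:gr4}; for $n=1$ one uses instead the $n=1$ line of \eqref{e:def P} with $U\equiv0$ and the same steps reproduce the formula with the $U$-term absent. The main obstacle is entirely bookkeeping: keeping the slot orders and the sign introduced by the $f_a=I_se_a$ relabelling consistent so the contracted third derivatives match the $P$-form, and then verifying that the torsion and $U$ coefficients collapse to the stated constants.
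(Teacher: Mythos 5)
Your proposal is correct and follows essentially the same route as the paper: differentiate the trace identity \eqref{xi1} in a qc-normal frame and apply the second Ricci identity of \eqref{boh2} to get \eqref{e:gr3}, then substitute the definition \eqref{e:def P} of $P_f$, integrate using $\int_M\nabla^3f(\nabla f,e_a,e_a)\vol=-\int_M(\triangle f)^2\vol$, and handle the torsion term via \eqref{need1} and \eqref{propt}. Your explicit verification of the sign flip $\sum_s\nabla^3f(I_sX,I_se_a,e_a)=-\sum_t\nabla^3f(I_tX,e_b,I_te_b)$ coming from the slot reordering is exactly the bookkeeping detail the paper's terse proof leaves implicit, and it is what makes the $T^0$-terms cancel and the $U$-coefficient come out to $\frac{n+1}{n-1}$.
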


\begin{proof}
Working in a qc-normal frame, taking into account the $Sp(n)Sp(1)$ invariance of $\nabla^2f(\xi_s,I_s\nabla f)$, we have
\begin{eqnarray*}
\sum_{s=1}^3\nabla^3f(I_sX,I_se_a,e_a)=\sum_{s=1}^3\nabla_{I_sX}[%
\nabla^2f(I_se_a,e_a)]=\sum_{s=1}^3\nabla_{I_sX}[4ndf(\xi_s)]=4n\sum_{s=1}^3%
\nabla^2f(I_sX,\xi_s) \\
=4n\sum_{s=1}^3\nabla^2f(\xi_s,I_sX)+4n\sum_{s=1}^3T(\xi_s,I_sX,\nabla f),
\end{eqnarray*}
where we used \eqref{xi1} for the second equality and the second equality in %
\eqref{boh2} for the fourth one.

For the second part of the lemma, first we express the term $%
\sum_{s=1}^3\nabla^{3}f(I_sX,I_se_a,e_a)$ in \eqref{e:gr3} by the definition %
\eqref{e:def P} of $P_f$. Then we replace $X$ by $\nabla f$, integrate over $%
M$ and use the easily checked equality
\begin{equation}  \label{lap}
\int_M\nabla^3(\nabla f,e_a,e_a)\vol=-\int_M(\triangle
f)^2\vol
\end{equation}
for the term $\nabla^3f(\nabla f,e_a,e_a)$. Finally  we use
\begin{equation}  \label{boh4}
2\sum_{s=1}^3T(\xi_s,I_s\nabla f,\nabla f)=2T^0(\nabla f,\nabla f)-6U(\nabla
f,\nabla f),
\end{equation}
which follows from \eqref{need1} together with \eqref{propt}.
This proves \eqref{e:gr4}.
\end{proof}

\subsection{The non-negativity of the $C$-operator for $n>1$}

Let $B_0$ be the trace-free part of the 3-component of the horizontal Hessian.
\begin{equation}  \label{np1}
4B_0(X,Y)=\nabla^2f(X,Y)+\nabla^2f(I_1X,I_1Y)+\nabla^2f(I_2X,I_2Y)+%
\nabla^2f(I_3X,I_3Y)+\frac1n\triangle fg(X,Y).
\end{equation}

\begin{thrm}
\label{pan} On a qc manifold of dimension $4n+3$ we have the formula
\begin{equation}  \label{panon}
4(\nabla_{e_a}B_0)(e_a,X)=\frac{n-1}nP_n(X).
\end{equation}
In particular, if the manifold is compact then the $C-$operator is
non-negative for any dimension bigger than seven. In this case for any function $f$ the function $Cf$ vanishes exactly when the trace-free part of the 3-component of a
function vanishes. In this case the $P-$form of $f$ vanishes as well.
\end{thrm}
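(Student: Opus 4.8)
The plan is to prove the pointwise identity \eqref{panon} by differentiating the definition \eqref{np1} of $B_0$ in a qc-normal frame and then rearranging the resulting third-order derivatives into the shape of the $P$-form \eqref{e:def P}; the non-negativity of the $C$-operator will follow by pairing \eqref{panon} with $\nabla f$ and integrating by parts. First I would compute the horizontal divergence of $B_0$. Differentiating \eqref{np1} and contracting in a qc-normal frame, where the connection $1$-forms (hence $\nabla_{e_a}e_b$, $\nabla_{e_a}I_s$ and $\nabla_{e_a}\omega_s$) vanish at the chosen point, gives
\[
4(\nabla_{e_a}B_0)(e_a,X)=\nabla^3f(e_a,e_a,X)+\sum_{s=1}^3\nabla^3f(e_a,I_se_a,I_sX)-\frac1n\,\nabla^3f(X,e_a,e_a),
\]
where I used $\triangle f=-\nabla^2f(e_a,e_a)$ to rewrite the term coming from $\frac1n\triangle f\,g$. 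Only the slot orderings distinguish these terms from those in \eqref{e:def P}.

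Next I would move the differentiation direction into the first slot to match \eqref{e:def P}, using the second- and third-order Ricci identities \eqref{boh2}. Reordering $\nabla^3f(e_a,e_a,X)$ into $\nabla^3f(X,e_a,e_a)$ and each $\nabla^3f(e_a,I_se_a,I_sX)$ into $\nabla^3f(I_sX,e_a,I_se_a)$ (swapping the last two slots, then the first two) produces the curvature terms $R(\cdot,\cdot,\cdot,\nabla f)$ together with vertical second-order terms weighted by $\omega_s$. Comparing coefficients, the genuinely third-order term $\nabla^3f(X,e_a,e_a)$ enters with the factor $\frac{n-1}{n}$, exactly as on the right of \eqref{panon}, while the excess $\frac1n\sum_s\nabla^3f(I_sX,e_a,I_se_a)$ relative to $\frac{n-1}{n}P_f$ collapses, by \eqref{xi1}, into a vertical second-order term. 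I would then (i) reduce every horizontal curvature contraction to $S$, $T^{0}$, $U$ using the \emph{algebraic} curvature identities \eqref{sixtyfour} (the horizontal Ricci tensor and the $\zeta_s$), ordering the commutations so as to land on these rather than on the Ricci-form contractions that require the differentiated identities \eqref{d3n6}; and (ii) convert the vertical Hessian terms $\nabla^2f(\xi_s,\cdot)$ into torsion contractions via the second identity of \eqref{boh2} together with \eqref{need1} and \eqref{propt}. Collecting the $S$-, $T^{0}$- and $U$-contributions should reproduce $\frac{n-1}{n}\big[-4nSdf(X)+4nT^{0}(X,\nabla f)-\frac{8n(n-2)}{n-1}U(X,\nabla f)\big]$, which is \eqref{panon}.

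The main obstacle is the coefficient bookkeeping in this last step. The $[3]$- and $[-1]$-symmetries \eqref{propt}, the many $\omega_s$-weighted vertical terms generated by the commutations, and the $\zeta_s$-contractions from \eqref{sixtyfour} must combine so that all vertical Hessian terms match on both sides and the rational coefficients come out exactly. In particular, producing the $U$-coefficient $\frac{8n(n-2)}{n-1}$ — the only term carrying $n-1$ in the denominator, which is precisely what makes the formula degenerate at $n=1$ — requires tracking the $U$-contributions coming both from the horizontal Ricci in \eqref{sixtyfour} and from the torsion identity \eqref{need1}, and verifying that no differentiated-torsion terms survive.

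Finally, for the non-negativity statement I would observe that, by \eqref{np1}, $B_0$ is the trace-free symmetric $[3]$-part of the Hessian, so contracting it against the full Hessian isolates its own norm, $B_0(e_a,e_c)\nabla^2f(e_a,e_c)=|B_0|^2$. Applying the divergence theorem \eqref{div} to the horizontal $1$-form $\sigma=B_0(\,\cdot\,,\nabla f)$, whose divergence in a qc-normal frame is $\nabla^*\sigma=-(\nabla_{e_a}B_0)(e_a,\nabla f)-|B_0|^2$, yields $\int_M(\nabla_{e_a}B_0)(e_a,\nabla f)\vol=-\int_M|B_0|^2\vol$. Setting $X=\nabla f$ in \eqref{panon} then gives
\[
-\int_M P_f(\nabla f)\vol=\frac{4n}{n-1}\int_M|B_0|^2\vol\geq0\qquad(n>1),
\]
which is the asserted non-negativity of the $C$-operator in dimension $4n+3>7$. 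Equality forces $B_0\equiv0$, i.e. the trace-free $[3]$-part of the Hessian vanishes, and then \eqref{panon} immediately gives $P_f\equiv0$.
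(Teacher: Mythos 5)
Your proposal is correct and follows essentially the same route as the paper: you commute the third-order derivatives in the divergence of $B_0$ via the Ricci identities \eqref{boh2} (the paper's \eqref{pan111}--\eqref{pan112}), absorb the excess term $\frac1n\sum_s\nabla^3f(I_sX,e_b,I_se_b)$ through the differentiated identity \eqref{xi1} plus the second Ricci identity (which is exactly the content of Lemma~\ref{gr3} that the paper cites), reduce the curvature contractions by the first two equalities of \eqref{sixtyfour}, and obtain non-negativity by integrating $B_0(\,\cdot\,,\nabla f)$ by parts using the orthogonality of the Hessian components. Your explicit computation $-\int_M P_f(\nabla f)\vol=\frac{4n}{n-1}\int_M|B_0|^2\vol$ just spells out what the paper leaves as a remark, and the vanishing statements follow as you say.
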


\begin{proof}
We have from the Ricci identities that:
\begin{equation}  \label{pan111}
\nabla^3f(e_a,e_a,X)=\nabla^3f(X,e_a,e_a)+Ric(X,\nabla
f)+4\sum_{s=1}^3\nabla^2(\xi_s,I_sX)+2\sum_{s=1}^3T(\xi_s,I_sX,\nabla f)
\end{equation}
Similarly, we obtain
\begin{multline}  \label{pan112}
\nabla^3f(e_a,I_se_a,I_sX)=\nabla^3f(I_sX,e_a,I_se_a)+4n\zeta_s(I_sX,%
\nabla f)-2\sum_{t=1}^3\omega_t(I_se_a,I_sX)T(\xi_t,e_a,\nabla f) \\
= \nabla^3f(I_sX,e_a,I_se_a)+4n\zeta_s(I_sX,\nabla
f)-2\sum_{s=1}^3T(\xi_s,I_sX,\nabla f).
\end{multline}
A substitution of \eqref{pan111}, \eqref{pan112}, \eqref{e:gr3} and the first and
second equality of \eqref{sixtyfour} in \eqref{np1} imply \eqref{panon}.  The last statement follows by integration by parts using the orthogonality of the components of the horizontal Hessian.
\end{proof}

\subsection{Non-negativity of the $P-$functions of eigenfunctions in dimension seven}

In dimension seven we show the following

\begin{prop}
\label{d3n1} On a seven dimensional compact qc-Einstein manifold of
constant positive qc scalar curvature, $S=2$, the $P-$function of any eigenfunction of the sub-Laplacian is non-negative.
\end{prop}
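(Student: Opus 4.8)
The plan is to reduce the non-negativity of the $P$-function of an eigenfunction to a pointwise algebraic inequality, exploiting heavily the qc-Einstein hypothesis and the special features of dimension seven. Since $M$ is qc-Einstein, the torsion endomorphism of the Biquard connection vanishes, so $T^0\equiv 0$ (and $U\equiv 0$ since $n=1$), and by \cite{IMV} the qc-scalar curvature $S$ is constant; the hypothesis fixes $S=2$. First I would substitute these vanishings into the $n=1$ definition \eqref{e:def P} of $P_f$, which collapses to
\begin{equation*}
P_f(X)=\nabla^3f(X,e_b,e_b)+\sum_{t=1}^3\nabla^3f(I_tX,e_b,I_te_b)-4S\,df(X).
\end{equation*}
Then I would impose the eigenvalue equation $\triangle f=\lambda f$, i.e. $\nabla^2f(e_a,e_a)=-\lambda f$, so that the first third-order term becomes $\nabla^3f(X,e_b,e_b)=X(\nabla^2f(e_b,e_b))=-\lambda\,df(X)$ (working in a qc-normal frame), converting the leading derivative into an algebraic multiple of $df$.

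The core of the argument is to rewrite $-\int_M P_f(\nabla f)\,\vol$ as an integral of manifestly non-negative quantities by combining the Bochner-type identity of Lemma~\ref{gr3} with the vanishing-torsion simplifications. Concretely, I would start from the integral formula \eqref{e:gr4}, which in the qc-Einstein case ($T^0=U=0$) reads
\begin{equation*}
\int_M\sum_{s=1}^3\nabla^2f(\xi_s,I_s\nabla f)\,\vol
=\int_M\Big[-\tfrac{1}{4n}P_n(\nabla f)-\tfrac{1}{4n}(\triangle f)^2-S|\nabla f|^2\Big]\vol,
\end{equation*}
and relate the left-hand vertical-derivative integral to horizontal second derivatives via the standard integration-by-parts machinery \eqref{div} together with the Ricci identities \eqref{boh2} and \eqref{xi1}. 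The goal is to express $-\int_M P_f(\nabla f)\,\vol$ as a sum of a full-Hessian square term $\int_M|\nabla^2 f|^2\,\vol$ (or its invariant components via \eqref{coshy1}, \eqref{coshy3}, \eqref{hess7d} and Lemma~\ref{l:inv_decomp_7D}) against lower-order terms controlled by $S\,|\nabla f|^2$ and $\lambda$; the eigenvalue relation then lets me trade $(\triangle f)^2=\lambda^2 f^2$ and, after an integration by parts giving $\int_M|\nabla f|^2\,\vol=\lambda\int_M f^2\,\vol$, package everything into a single sign-definite expression.

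The main obstacle I anticipate is controlling the \emph{vertical} contributions. In dimension seven the vertical distribution need not be integrable and the Reeb directions $\xi_1,\xi_2,\xi_3$ interact nontrivially with the horizontal Hessian through the terms $\nabla^2 f(\xi_s,I_s\nabla f)$ and the curvature terms $R(\xi_i,X,Y,\nabla f)$ appearing in the third Ricci identity of \eqref{boh2}. The delicate point is that the antisymmetric $[-1]$-component of the Hessian, recorded in \eqref{hess7d} as $(\nabla^2f)_{[-1][a]}(X,Y)=-df(\xi_i)\omega_i(X,Y)$, contributes precisely the $df(\xi_s)$ terms that must be shown to assemble into a non-negative square rather than an indefinite cross term. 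I expect the decisive step to be an application of the dimension-seven Hessian decomposition (Lemma~\ref{l:inv_decomp_7D}) showing that, after integration, the vertical-derivative terms $\sum_s df(\xi_s)^2$ enter with a favorable sign once $S=2>0$ is used; the positivity of $S$ is what forces the constant coefficients to line up so that the final integrand is a genuine sum of squares, yielding $-\int_M P_f(\nabla f)\,\vol\ge 0$.
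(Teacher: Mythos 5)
Your plan has a genuine gap: the decisive sign claim on which it rests is false. If you carry out your own reduction --- expand $-\int_M P_f(\nabla f)\vol$ using \eqref{lap} for the term $\nabla^3f(\nabla f,e_b,e_b)$ and the first and third identities of \eqref{e:3eqs in dim 7} (i.e. \eqref{e:gr3} and \cite[Lemma 3.4]{IPV1}) with $T^0=U=0$ --- you find
\begin{equation*}
-\int_M P_f(\nabla f)\vol=\int_M\Big[(\triangle f)^2+4S|\nabla f|^2-16\sum_{s=1}^3(\xi_sf)^2\Big]\vol ,
\end{equation*}
so the vertical terms $\sum_s(\xi_sf)^2$ enter with an \emph{unfavorable} (negative) sign, contrary to your expectation that positivity of $S$ makes them assemble into a non-negative square. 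Attempting to eliminate them via the Bochner identity (the second formula in \eqref{e:3eqs in dim 7}) gives
\begin{equation*}
-\int_M P_f(\nabla f)\vol=\int_M\Big[(2\lambda-4S)|\nabla f|^2-\tfrac{4}{3}|(\nabla ^{2}f)_{[-1][s]}|^{2}\Big]\vol ,
\end{equation*}
which is sign-definite only if one already knows $\lambda\ge 2S$ (and, in addition, controls the symmetric $[-1]$-part of the Hessian). But $\lambda\ge 2S=\frac13 k_0$ is exactly the conclusion of Theorem~\ref{mainpan}, for which the present proposition supplies the hypothesis; your route is therefore circular, and no sum-of-squares expression for $-\int_M P_f(\nabla f)\vol$ emerges from these identities alone.

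The missing idea --- the heart of the paper's proof --- is to compute instead the manifestly non-negative quantity $\int_M|P_f|^2\vol$. Repeated integration by parts, using that third covariant derivatives commute on a qc-Einstein manifold of constant qc-scalar curvature (a consequence of \eqref{d3n5}, \eqref{d3n6} and the last Ricci identity in \eqref{boh2}) and that the vertical distribution \emph{is} integrable in this setting (contrary to your worry: $T^0=U=0$, $S=2$ const, \eqref{d3n6} and \eqref{sixtyfour} give $g([\xi_s,\xi_t],X)=0$, whence $\nabla^2f(\xi_k,\xi_j)-\nabla^2f(\xi_j,\xi_k)=-2df(\xi_i)$), leads to the identity \eqref{d3n3}:
\begin{equation*}
\int_M|P_f|^2\vol=-(\lambda+8)\int_M P_f(\nabla f)\vol-3\cdot 8^3\int_M\sum_{s=1}^3(\xi_sf)^2\vol .
\end{equation*}
Since the left-hand side is non-negative and the vertical term is subtracted, $-(\lambda+8)\int_M P_f(\nabla f)\vol\ge 0$, and $\lambda>0$ yields the claim. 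It is precisely this ``square of the $P$-form'' device that places the badly-signed vertical terms on the favorable side of the identity; nothing in your proposal produces an analogue of it.
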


\begin{proof}
Suppose $f$ satisfies $\triangle f=-\nabla^2f(e_a,e_a)=\lambda f$. We
calculate
\begin{multline}  \label{d3n2}
\int_M|P_f|^2\vol=\int_M\Big[ \nabla
^{3}f(e_a,e_{b},e_{b})+\sum_{t=1}^{3}\nabla
^{3}f(I_{t}e_a,e_{b},I_{t}e_{b})-4Sdf(e_a)\Big]^2\vol \\
=-4S\int_MP_1(\nabla f)\vol-\int_M\nabla^2f(e_b,e_b)Cf\vol
\\
-\int_M\sum_{s,t=1}^3\nabla^2f(e_b,I_se_b)\Big[\nabla^4f(I_se_a,e_a,e_c,e_c)+%
\nabla^4f(I_se_a,I_te_a,e_c,I_te_c)-4S\nabla^2f(I_se_a,e_a)\Big]\vol \\
=-(\lambda+4S)\int_MP_1(\nabla f)\,
Vol_{\eta}-4S\int_M\sum_{s=1}^3[\nabla^2f(e_a,I_se_a)]^2\vol \\
-
8\int_M\sum_{s=1}^3\nabla^2f(e_b,I_se_b)[\nabla^2(\xi_s,e_c,e_c)-%
\nabla^2(e_c,e_c,\xi_s)]\vol
-\int_M\sum_{s\not=t}^3\nabla^2f(e_b,I_se_b)\nabla^4f(I_se_a,I_te_a,e_c,I_te_c)
\\
= -(\lambda+4S)\int_MP_f(\nabla f)\,
Vol_{\eta}-8^3\int_M\sum_{s=1}^3[df(\xi_s)]^2\,
Vol_{\eta}+8\int_Mdf(\xi_i)8^2[\nabla^2(\xi_k,\xi_j)-\nabla^2(\xi_j,\xi_k)]%
\vol,
\end{multline}
where we used the first Ricci identity in \eqref{boh2} several times and the
fact that third covariant derivatives commute on a qc-Einstein manifold with
constant scalar curvature because of  \eqref{d3n6} and the last of the Ricci identities in \eqref{boh2}. The assumptions $T^0=U=0, S=2=const$ together with the second equality in \eqref{d3n6} and the fourth equality in \eqref{sixtyfour} imply that the vertical space is integrable, $g([\xi_s,\xi_t],X)=0$. Now, the Ricci identity,  the integrability of the vertical space  and the third equality in \eqref{sixtyfour} yield
\begin{equation*}
\nabla^2f(\xi_k,\xi_j)-\nabla^2f(\xi_j,\xi_k)=T(\xi_j,\xi_k,df)=-2df(\xi_i).
\end{equation*}
A substitution of this equality in \eqref{d3n2} gives
\begin{multline}  \label{d3n3}
\int_M|P_f|^2\vol= -(\lambda+8)\int_MP_f(\nabla f)\,
Vol_{\eta}-8^3\int_M\sum_{s=1}^3[df(\xi_s)]^2\,
Vol_{\eta}-2\times8^3\int_M\sum_{s=1}^3[df(\xi_s)]^2\vol
\end{multline}
and the non-negativity of the $P-$function of $f$ follows since $\lambda>0$.
\end{proof}

\section{The Lichnerowicz type result for a seven dimensional qc structure}
Here we shall prove our main Theorem \ref{mainpan}. From  \eqref{hess7d} and \eqref{xi1}  we have
\begin{equation}\label{xiii}
|(\nabla ^{2}f)_{[3]}|^{2}=\frac{\left( \triangle f\right) ^{2}}{4},\qquad
|(\nabla ^{2}f)_{[-1][a]}|^{2}=4\sum_{s=1}^{3}(\xi _{s}f)^{2}.
\end{equation}
Next, we recall the four identities given  by   \cite[Lemma 3.3]{IPV1}, \cite[Lemma 3.4]{IPV1}, Lemma \ref{gr3}, and the Bochner identity \cite[(4.1)]{IPV1}.  However, in this lowest dimension, Bochner's identity  \cite[(4.1)]{IPV1} and \cite[Lemma 3.3]{IPV1} are identical. Therefore, in dimension seven, using $U=0$, \eqref{xiii} and \eqref{boh4}  we have the following three identities
\begin{equation}  \label{e:3eqs in dim 7}
\begin{aligned} & \int_{M}\sum_{s=1}^{3}\nabla ^{2}f(\xi _{s},I_{s}\nabla
f)\,Vol_{\eta }=-\int_{M}\Big[|(\nabla ^{2}f)_{[-1][a]}|^{2}+T^{0}(\nabla
f,\nabla f)\Big]\,Vol_{\eta }, \\ & \int_{M}\sum_{s=1}^{3}\nabla ^{2}f(\xi
_{s},I_{s}\nabla f)\,Vol_{\eta }=\int_{M}\Big[\frac{3}{16}(\triangle
f)^{2}-\frac{1}{4}|(\nabla ^{2}f)_{[-1][a]}|^{2}-\frac{1}{4}|(\nabla
^{2}f)_{[-1][s]}|^{2}\Big]\vol\\ & \hskip3.3in
+\int_{M}\Big[-\frac{3}{2}T^{0}(\nabla f,\nabla f)-\frac{3}{2}S|\nabla
f|^{2}\Big]\,\,Vol_{\eta }, \\ & \int_{M}\sum_{t=1}^{3}\nabla ^{2}f(\xi
_{t},I_{t}\nabla f)\,Vol_{\eta }=\int_{M}\Big[ -T^{0}(\nabla f,\nabla
f)-\frac{1}{4}\sum_{t=1}^{3}\nabla ^{3}f(I_{t}\nabla
f,e_{b},I_{t}e_{b})\Big]\,Vol_{\eta }, \end{aligned}
\end{equation}\
corresponding to \cite[Lemma 3.4]{IPV1},  \cite[Lemma 3.3]{IPV1} and \eqref{e:gr3}, respectively.

The a-priori Lichnerowicz type assumption gives the inequality
\begin{equation*}
10T^{0}(\nabla f,\nabla f)+6S|\nabla f|^{2}\geq k_{0}|\nabla f|^{2}.
\end{equation*}%
From the first two formulas in \eqref{e:3eqs in dim 7} we have%
\begin{eqnarray*}
0 &=&\int_{M}\Big[\frac{3}{16}(\triangle f)^{2}+\frac{3}{4}|(\nabla
^{2}f)_{[-1][a]}|^{2}-\frac{1}{4}|(\nabla ^{2}f)_{[-1][s]}|^{2}-\frac{1}{2}%
T^{0}(\nabla f,\nabla f)-\frac{3}{2}S|\nabla f|^{2}\,\Big]\,\,Vol_{\eta }
\end{eqnarray*}
while the first and the last formula in \eqref{e:3eqs in dim 7} give%
\begin{eqnarray*}
\int_{M}|(\nabla ^{2}f)_{[-1][a]}|^{2}\,Vol_{\eta } &=&\int_{M}\frac{1}{4}%
\sum_{t=1}^{3}\nabla ^{3}f(I_{t}\nabla f,e_{b},I_{t}e_{b})\,Vol_{\eta }.
\end{eqnarray*}
A substitution of the last equation in the previous identity brings us to%
\begin{multline}  \label{mnp}
0 = \\
\int_{M}\Big[-\frac{3}{16}(\triangle f)^{2}-\frac{3}{16}\sum_{t=1}^{3}\nabla
^{3}f(I_{t}\nabla f,e_{b},I_{t}e_{b})+\frac{1}{4}|(\nabla
^{2}f)_{[-1][s]}|^{2}+\frac{1}{2}T^{0}(\nabla f,\nabla f)+\frac{3}{2}%
S|\nabla f|^{2}\,\Big]\,\,\,Vol_{\eta } \\
=\int_{M}\left[\left( \frac{5}{4}T^{0}(\nabla f,\nabla f)+\frac{3}{4}%
S|\nabla f|^{2}-\frac{3}{8}\lambda |\nabla f|^{2}\right) +\frac{1}{4}%
|(\nabla ^{2}f)_{[-1][s]}|^{2}\right]\,Vol_{\eta } \\
+\int_{M}\left [\frac{3}{16}(\triangle f)^{2}-\frac{3}{16}%
\sum_{t=1}^{3}\nabla ^{3}f(I_{t}\nabla f,e_{b},I_{t}e_{b})+\frac{3}{4}%
S|\nabla f|^{2}-\frac34T^{0}(\nabla f,\nabla f)\right]\,Vol_{\eta } \\
=\int_{M}\left[\left( \frac{5}{4}T^{0}(\nabla f,\nabla f)+\frac{3}{4}%
S|\nabla f|^{2}-\frac{3}{8}\lambda |\nabla f|^{2}\right) +\frac{1}{4}%
|(\nabla ^{2}f)_{[-1][s]}|^{2}-\frac {3}{16} P_f(\nabla f)\right]\,Vol_{\eta
},
\end{multline}%
where $P_f$ was defined in \eqref{e:def P}. If we assume that
the $P-$function of $f$ is non-negative,
\begin{equation*}
-\int_M P_f(\nabla f)\vol\geq 0,
\end{equation*}%
then we have%
\begin{equation}\label{eqeq}
0\geq \int_{M}\Big[\left( \frac{1}{8}k_{0}-\frac{3}{8}\lambda |\nabla
f|^{2}\right) +\frac{1}{4}|(\nabla ^{2}f)_{[-1][s]}|^{2}\,\,\Big]\,Vol_{\eta
}.
\end{equation}%
Therefore,
\begin{equation*}
\lambda \geq k_{0}/3.
\end{equation*}
This finishes the proof of Theorem \ref{mainpan}.

\begin{rmrk}
Note that in the extremal case in Theorem~\ref{mainpan} when $\lambda=\frac13k_0$ it follows from \eqref{mnp} that  the corresponding extremal eigenfunction $f$ satisfies the equalities
\begin{equation}\label{extr}(\nabla ^{2}f)_{[-1][s]}=0, \qquad \int_M P_f(\nabla f)\vol=0.
\end{equation}
The first equality in \eqref{extr} together with \eqref{xiii} imply that the horizontal Hessian of an extremal eigenfunction is given by
$$(\nabla ^{2}f)(X,Y)=-\frac{k_0}3fg(X,Y)-\sum_{s=1}^3df(\xi_s)\omega_s(X,Y).
$$
\end{rmrk}

\section{Proof of Corollary~\ref{c:LOin7D}}

In view of Theorem \ref{mainpan} and Proposition \ref{d3n1} we only have to show that if $\lambda=4$ is a an eigenvalue then $M$ is the standard unit 3-Sasakian sphere.
Since $M$ is a qc-Einstein manifold of constant normalized positive scalar curvature $S=2$ then by \cite[Theorem 1.3]{IMV} it follows $M$ is locally 3-Sasakian.  In other words, locally there exists an $SO(3)$-matrix $\Psi$ with smooth entries, such that, the local qc structure determined by $\Psi \cdot \eta$ is 3-Sasakian. The claim then follows as in the higher dimensional case \cite[Theorem 1.2]{IPV1} invoking the (Riemannian) Obata theorem. For this we consider the natural Riemannian metric (also denoted  by) $g$
 defined using the triple of Reeb vector fields  extending $g$ to a metric on $%
M$ by requiring $span\{\xi_1,\xi_2,\xi_3\}=V\perp H \text{ and }
g(\xi_s,\xi_k)=\delta_{sk}. $ The extended metric does not depend on the action of $SO(3)$ on $V$ and the Biquard connection is metric.

On one hand we have the well known fact that a 3-Sasakian manifold of dimension $4n+3$ is Einstein with respect to the extended metric with Riemannian scalar curvature $4n+2$ \cite{Kas}, i.e., the Riemannian Ricci
tensor $Ric^g$ is given by
\begin{equation}  \label{eq21}
Ric^g(A,A)=(4n+2)\,g(A,A).
\end{equation}
By Lichnerowicz' theorem \cite{Li} and \eqref{eq21} we have
\begin{equation}  \label{Lich}
\mu\geq 4n+3=7,
\end{equation}
where $\mu$ is the first eigenvalue of the Riemannian Laplacian of the extended metric.

On the other hand, it follows from the variational characterization of the first eigenvalues and the relation bewteen the Riemannian Laplacian and the sub-Laplacian, see \cite[Proposition~ 5.2]{IPV1} that
\begin{equation}  \label{eigenv}
\mu\leq\lambda+\int_M\sum_{s=1}^3(df(\xi_s))^2 \, Vol_{\eta}
\end{equation}
for any smooth function $f$ with $\int_Mf^2\,\, Vol_{\eta}=1$.

After a possible rescaling of $f$ and using the
divergence formula we have then the following identities
\begin{equation}  \label{eq4}
\begin{aligned} & \lambda=4,\quad \triangle f= 4f,\quad\int_Mf^2\,
Vol_{\eta}=1,& \int_M |\nabla f|^2 \, Vol_{\eta}= 4=\frac
{1}{4}\int_M (\triangle f)^2\, Vol_{\eta}. \end{aligned}
\end{equation}
{  For $\lambda=4$ the first equality in  \eqref{extr}} combined with  the first two equations in \eqref{e:3eqs in dim 7} and
\eqref{xiii} yield
\begin{equation}  \label{eq111}
\int_M\sum_{s=1}^3(df(\xi_s))^2\, Vol_{\eta}=3.
\end{equation}
Now,  from \eqref{eq111} and \eqref{eigenv} we have the inequality
$ \mu\leq 4+3=7 $
which combined with  \eqref{Lich} yields the equality $\mu=4+3=7.$
Therefore, by Obata's result \cite{O3} we conclude that the manifold $(M,g)$ is
isometric to the sphere $S^{7}(1)$ and hence the manifold $(M,g,\mathbb{Q}%
)$ is qc equivalent to the $3$-Sasakian sphere of dimension $7$.
The last statement follows from the fact that any
3-Sasakian manifold satisfies $T^0=U=0, S=2$ \cite{IMV}. This
completes the proof of of Corollary~\ref{c:LOin7D}.


\begin{thebibliography}{99}

\bibitem{ACB} Astengo, F., Cowling, M., \& Di Blasio, B., \emph{The Cayley
transform and uniformly bounded representations}, J. Funct. Anal. {\bf 213}
(2004), no. 2, 241--269.

\bibitem{Bar} Barletta, E., \emph{The Lichnerowicz theorem on CR manifolds.}
Tsukuba J. Math. {\bf 31} (2007), no. 1, 77--97.

\bibitem{Biq1} Biquard, O., \emph{M\'{e}triques d'Einstein asymptotiquement
sym\'{e}triques}, Ast\'{e}risque \textbf{265} (2000).

\bibitem{CSal} Capria, M. \& Salamon, S., \emph{Yang-Mills fields on
quaternionic spaces} Nonlinearity \textbf{1} (1988), no. 4, 517--530.

\bibitem{Car} Cartan, E., \emph{Sur la geometrie pseudo-conforme des hypersurfaces de l'espace de deux variables complexes},\
I. Ann. di Mat., {\bf 11} (1932), 17--90,\ II. Ann. Sci. Norm.
Sup. Pisa, {\bf 1} (1932), 333--354.

\bibitem{CC09a} Chang, S.-C., \& Chiu, H.-L., \emph{Nonnegativity of CR
Paneitz operator and its application to the CR Obata's theorem.} J. Geom.
Anal. {\bf 19} (2009), 261--287.

\bibitem{CC09b} \bysame, \emph{On the CR analogue of Obata's theorem in a
pseudohermitian 3-manifold.} Math. Ann. {\bf 345} (2009), 33--51.

\bibitem{CC07} Chang, S.-C., \& Chiu, H.-L., \emph{On the estimate of the
first eigenvalue of a subLaplacian on a pseudohermitian 3-manifold.} Pacific
J. Math. {\bf 232} (2007), no. 2, 269--282.

\bibitem{CCC07} Chang, S.-C., Cheng, J.-H., \& Chiu, H.-L., \emph{A fourth
order curvature flow on a CR 3-manifold.} Indiana Univ. Math. J. {\bf 56} (2007),
no. 4, 1793--1826.

\bibitem{ChW} Chang, S.-C., \& Wu, C.-T., \emph{The entropy formulas for the
$CR$ heat equation and their applications on pseudohermitian $(2n + 1)$%
-manifolds.} Pacific J. Math., {\bf 246} (2010), no. 1, 1--29.

\bibitem{ChM} Chern, S.S. \& Moser, J., \emph{Real hypersurfaces in complex manifolds},
Acta Math. \textbf{133} (1974), 219--271.

\bibitem{Chi06} Chiu, H.-L., \emph{The sharp lower bound for the first
positive eigenvalue of the subLaplacian on a pseudohermitian 3-manifold.}
Ann. Global Anal. Geom. {\bf 30} (2006), no. 1, 81--96.



\bibitem{D} Duchemin, D., \emph{Quaternionic contact structures in dimension
7}, Ann. Inst. Fourier (Grenoble) \textbf{56} (2006), no. 4, 851--885.




\bibitem{Gr} Greenleaf, A., \emph{The first eigenvalue of a subLaplacian on
a pseudohermitian manifold.} Commun. Partial Diff. Equations, {\bf 10} (1985), no.
2, 191--217.

\bibitem{IMV} Ivanov, S., Minchev, I., \& Vassilev, D.,\ \emph{Quaternionic
contact Einstein structures and the quaternionic contact Yamabe problem},
preprint, math.DG/0611658.

\bibitem{IMV1} \bysame, \emph{Extremals for the Sobolev inequality on the
seven dimensional quaternionic Heisenberg group and the quaternionic contact
Yamabe problem}, J. Eur. Math. Soc. (JEMS) {\bf 12} (2010), no. 4, 1041--1067.

\bibitem{IMV2} \bysame, \emph{The optimal constant in the $L^2$
Folland-Stein inequality on the quaternionic Heisenberg group}, Ann. Sc. Norm. Super. Pisa Cl. Sci.
(5) {\bf Vol. XI} (2012), 1-18.

\bibitem{IPV1} Ivanov, S., Petkov, A., \& Vassilev, D., \emph{The sharp
lower bound of the first eigenvalue of the sub-Laplacian on a
quaternionic contact manifold}, to appear in J.  Geom. Anal.


\bibitem{IV} Ivanov, S., \& Vassilev, D., \emph{Conformal quaternionic
contact curvature and the local sphere theorem}, J. Math. Pures Appl.
\textbf{93} (2010), 277--307.

\bibitem{IV1} \bysame, \emph{Quaternionic contact manifolds with a closed
fundamental 4-form}, Bull. London Math. Soc., (2010) {\bf 42} (6), 1021--1030.

\bibitem{IV2} \bysame, \emph{Extremals for the Sobolev Inequality and the
Quaternionic Contact Yamabe Problem}, Imperial College Press Lecture Notes,
World Scientific Publishing Co. Pte. Ltd., Hackensack, NJ, 2011.%

\bibitem{IVO} Ivanov, S., Vassilev, D., \emph{An Obata type result for the first eigenvalue of the sub-Laplacian
on a CR manifold with a divergence free torsion}, arXiv:1203.5812.

\bibitem{IV3} Ivanov, S., Vassilev, D., \emph{An Obata-type theorem on a three dimensional CR manifold}, arXiv:1208.1240.

\bibitem{IVZ} Ivanov, S., Vassilev, D., \& Zamkovoy, S., \emph{Conformal
Paracontact curvature and the local flatness theorem}, Geom. Dedicata
\textbf{144} (2010), 79--100.

\bibitem{Kas} Kashiwada, T., \emph{A note on Riemannian space with Sasakian
3-structure}, Nat. Sci. Reps. Ochanomizu Univ., \textbf{22}~(1971), 1--2.

\bibitem{Li} Lichnerowicz, A., \emph{G\'{e}om\'{e}trie des groupes de
transformations.} Travaux et Recherches Mathematiques, III. Dunod, Paris
1958.

\bibitem{LL} Li, S.-Y., \& Luk, H.-S., \emph{The sharp lower bound for the
first positive eigenvalue of a sub-Laplacian on a pseudo-Hermitian manifold.}
Proc. Amer. Math. Soc. {\bf 132} (2004), no. 3, 789--798.

\bibitem{LW1} Li, S.-Y., Wang, X., \emph{An Obata type theorem in CR
geometry}, arXiv:1207.4033.



\bibitem{O3} Obata, M., \emph{Certain conditions for a Riemannian manifold
to be iosometric with a sphere.} J. Math. Soc. Japan \textbf{14}, no.3,
(1962), 333--340.

\bibitem{Sperb}  Sperb, Ren\'e P. Maximum principles and their applications. Mathematics in Science and Engineering, 157. Academic Press, Inc. [Harcourt Brace Jovanovich, Publishers], New York-London, 1981. ix+224 pp. ISBN: 0-12-656880-4

\bibitem{Wei} Wang, W., \emph{The Yamabe problem on quaternionic contact
manifolds}, Ann. Mat. Pura Appl., \textbf{186} (2007), no. 2, 359--380.

\end{thebibliography}
\end{document}